\newtheorem{theorem}{Theorem}[section]
\newtheorem{lemma}[theorem]{Lemma}
\newtheorem{proposition}[theorem]{Proposition}
\newtheorem{definition}{Definition}
\newtheorem{example}[theorem]{Example}
\newtheorem{question}{Question}
\newcommand{\Snake}{\operatorname{Snake}}
\newcommand{\ind}{\operatorname{ind}}
\author{Ivan Mitrofanov}
\address{I.M.: C.N.R.S., \'{E}cole Normale Superieur, PSL Research University, France }
\email{phortim@yandex.ru}
\thanks{The author has received funding from the European Research Council (ERC) under
the European Union's Horizon 2020 
research and innovation program (grant agreement
No.725773)}
\title{{Total orders on compact metric spaces and covering dimension}}
\begin{document}

\begin{abstract}

We prove that for a compact metric space  the property of having finite covering dimension is equivalent to the existence of a total order with finite snake number.

\end{abstract}

\maketitle

\noindent\rule{12.7cm}{1.0pt}

\noindent
\textbf{
Keywords:}
total order $\cdot$
covering dimension $\cdot$
small inductive dimension.

\noindent\rule{12.7cm}{1.0pt}

\section{Introduction}\label{intro}

The following concept was introduced in \cite{EM1}.
Let $M$ be a metric space and let $T$ be a total order on $M$ (that is, for any distinct points $x, y \in M$ either $x<_Ty$ or $y<_Tx$).
Then for the ordered space $(M,T)$ one calculates {\it the order breakpoint} $\text{Br}(M,T) \in \mathbb{N} \cup \{+\infty\}$. 
The value $\min_{T}\text{Br}(M,T)$ is a quasi-isometry invariant for uniformly discrete metric spaces.

It was shown in \cite{EM2} that a metric space with finite
Assouad-Nagata dimension admits an order $T$ for which $\text{Br}(M,T)<\infty$.
It was also shown that some properties contradicting to finiteness of Assoud-Nagata dimension also forbid orders with finite $\text{Br}$,
and it was raised a question whether the existence of orders with finite $\text{Br}$ is equivalent to finiteness of Assoud-Nagata dimension.

For metric spaces there are various definitions of dimension based on coverings by sets with bounded intersection multiplicity.
In order to establish similar results for {\it Lebesgue covering dimension}, we introduce the notion of {\it snake number} as an analogue of $\text{Br}$.

\begin{definition}
	Let $M$ be a metric space, $T$ be a total order on $M$, and let $(U_1, U_2)$ be an ordered pair of subsets of $M$.
	We say that $(U_1, U_2)$ \emph{contains a snake of length $s$} if there exists a sequence of points 
	$$a_0 <_T a_1 <_T \dots <_T a_s$$ 
	such that
	$U_1$ contains the points $a_0, a_2, \dots, a_{2\lfloor s/2\rfloor}$ 
	and $U_2$ contains the points 
	$a_1, a_3, \dots, a_{2\lceil s/2 \rceil - 1}$.
\end{definition}

\begin{definition}
Let $(M,T)$ be an ordered metric space and let $x,y$ be distinct points of $M$.
\emph{The snake number } $\Snake_T(x,y)$ is the greatest number $s$ such that for any $\varepsilon > 0$ the pair of neighborhoods 
$(B_{\varepsilon}(x), B_{\varepsilon}(y))$ contains a snake of length $s$.
If the greatest such number does not exist, 
we say that $\Snake(x,y) = \infty$.
\end{definition}

\begin{definition}
Let $M$ be a metric space and $T$ be a total order on $M$. 
The \emph{Snake number} of the ordered space $(M,T)$ is defined as
$$
\Snake(M,T):=\sup_{x,y \in M, x \neq y} \Snake_T(x,y).
$$
\end{definition}

The main result of this paper is

\begin{theorem}\label{thm:main}
Let $M$ be a compact metric space.
Then the following conditions are equivalent:
\begin{enumerate}
	\item the covering dimension $\dim M < \infty$
	\item there exists an order $T$ on $M$ such that $\Snake(M,T) < \infty$.
\end{enumerate}
\end{theorem}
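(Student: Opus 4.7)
The plan is to prove the two implications separately. For $(1)\Rightarrow(2)$, I would use the Hurewicz--Nagata characterization: a compact metric space $M$ with $\dim M=n$ is the image under a continuous surjection $f\colon K\to M$ of some zero-dimensional compact set $K\subset[0,1]$ with $|f^{-1}(y)|\le n+1$ for every $y\in M$. Transport the natural order of $[0,1]$ to $M$ by declaring $x<_T y$ iff $\min f^{-1}(x)<\min f^{-1}(y)$. For any snake $a_0<_T\cdots<_T a_s$ alternating between $B_\varepsilon(x)$ and $B_\varepsilon(y)$, the lifts $c_i:=\min f^{-1}(a_i)$ form a strictly increasing sequence in $K$. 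Letting $\varepsilon\to 0$ and using compactness of $K$, subsequential limits $c_i^*$ exist with $f(c_i^*)\in\{x,y\}$ according to parity; since consecutive limits land in $\{x\}$ and $\{y\}$ respectively they must differ, so $c_0^*<\cdots<c_s^*$ strictly. This yields $\lceil(s+1)/2\rceil\le|f^{-1}(x)|\le n+1$, and hence $\Snake(M,T)\le 2n+1$.

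For $(2)\Rightarrow(1)$, suppose $\Snake(M,T)\le s$. The goal is to construct, for every $\varepsilon>0$, an open cover of $M$ of mesh less than $\varepsilon$ and multiplicity bounded by a function of $s$ alone. I would first derive a uniform compactness consequence: for every $(x,y)$ with $x\ne y$ the hypothesis provides $\delta_{x,y}>0$ such that $(B_{\delta_{x,y}}(x),B_{\delta_{x,y}}(y))$ admits no snake of length $s+1$, the set of good pairs is open in $M\times M$, and hence by compactness of $\{(x,y):d(x,y)\ge\varepsilon\}$ a single $\delta>0$ works for every pair at distance at least $\varepsilon$. Using this $\delta$, choose a fine $T$-ordered net $X=\{x_1<_T\cdots<_T x_k\}$, form the order-intervals $I_j:=[x_j,x_{j+1})_T$, and thicken each metrically to obtain open sets $U_j$ forming the candidate cover.

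The main obstacle, and what I expect to be the technical core of the paper, is verifying that the cover so constructed has both small mesh and bounded multiplicity as consequences of the snake bound alone. For the mesh: if some $I_j$ contained two points $y,y'$ with $d(y,y')\ge\varepsilon$, the uniform snake bound on $(B_\delta(y),B_\delta(y'))$ combined with the order constraint $y,y'\in[x_j,x_{j+1})_T$ and careful interleaving with additional net points should force an impossible snake of length greater than $s$. For the multiplicity: a point $z$ lying in many order-wise separated $U_j$'s would similarly exhibit an excessive alternation pattern of $T$ around $z$. Making these snake-counting arguments precise---plausibly via induction on $s$ or through a hierarchy of net refinements---is where the novelty of the result lies.
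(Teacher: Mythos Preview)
Your argument for $(1)\Rightarrow(2)$ is correct and takes a genuinely different route from the paper. The paper builds the order by hand: it constructs a nested sequence of open covers $\mathcal{U}_i$ of mesh $2^{-i}$, each locally $(n+1)$-fold, assigns to every point a chain $(U_0(x),U_1(x),\ldots)$ via K\H{o}nig's lemma, and orders points lexicographically. Your shortcut through the Hurewicz representation $f\colon K\to M$ with $|f^{-1}(y)|\le n+1$ gives the same bound $2n+1$ with less work; the limit argument is sound, the key point being that consecutive limits $c_i^*,c_{i+1}^*$ are distinct because $f$ sends them to $x$ and $y$ respectively, so the weak limit inequalities become strict. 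What the paper's construction buys is self-containedness (no black-box appeal to the Hurewicz--Kuratowski theorem) and an explicit, adjustable order; what yours buys is brevity.

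For $(2)\Rightarrow(1)$, however, there is a genuine gap, and not just missing detail. The order-intervals $I_j=[x_j,x_{j+1})_T$ of a metrically fine net need not have small diameter, and the snake hypothesis gives you no purchase on this. If $y,y'\in I_j$ are far apart, the uniform bound tells you only that $(B_\delta(y),B_\delta(y'))$ contains no snake of length $s+1$; this is a \emph{prohibition} on alternation, not a source of one, and there is nothing to contradict. The net points themselves lie outside the open interval $(x_j,x_{j+1})_T$, so ``interleaving with additional net points'' is unavailable. The same obstruction blocks the multiplicity argument: a point $z$ meeting many thickened $U_j$'s yields many nearby points in distinct $T$-intervals, but a snake requires alternation between \emph{two} small balls, and you have no candidate for the second centre. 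As written, the scheme does not convert the snake bound into either mesh or multiplicity control.

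The paper's proof of $(2)\Rightarrow(1)$ is structurally different: it is an induction on $s$ via the small inductive dimension $\ind$ (equal to $\dim$ here). The two ingredients are: first, if $M=A\sqcup B$ with every point of $A$ $T$-below every point of $B$, then $\partial A$ carries a new order $T'$ with $\Snake(\partial A,T')\le\Snake(M,T)-1$; second, for any $x_0$ and $r>0$ one can cover $M\setminus B_r(x_0)$ by finitely many $T$-convex sets that miss a neighbourhood of $x_0$. Together these produce, around each point, arbitrarily small open neighbourhoods whose boundaries lie in finitely many boundaries of $T$-convex sets, each of $\ind\le s-1$ by the first lemma and the induction hypothesis. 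The decrement of the snake number on order-cut boundaries is the engine of the proof, and nothing in your outline plays that role.
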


Let us specify the upper and lower bounds.

\begin{theorem}\label{thm:A}
	Let $(M, T)$ be an ordered compact metric space such that $\Snake(M,T) \leqslant s$.
	Then $\dim M \leqslant s$.
\end{theorem}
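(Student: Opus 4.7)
I would argue by induction on $s$, using cuts in the total order $T$ as the reduction device. The base case $s=0$ is essentially trivial, since $\Snake(M,T)\le 0$ forces $|M|\le 1$: for any distinct $x,y\in M$, taking $a_0,a_1$ to be the $T$-smaller and $T$-larger of $\{x,y\}$ (lying in their own $\varepsilon$-balls for every $\varepsilon$) already produces a snake of length $1$. For the inductive step, I first upgrade the pointwise hypothesis by a standard compactness argument: for every $\eta>0$ there exists $\varepsilon_0>0$ such that $(B_{\varepsilon_0}(x),B_{\varepsilon_0}(y))$ contains no snake of length $s+1$ whenever $d(x,y)\ge\eta$. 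The argument extracts convergent subsequences from a would-be failing sequence $(x_n,y_n)$ with $d(x_n,y_n)\ge\eta$ and $(B_{1/n}(x_n),B_{1/n}(y_n))$ containing a length-$(s+1)$ snake.

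The heart of the argument is a structural lemma: for each $c\in M$, the closed set
\[
\partial_c \;:=\; \overline{\{x\in M : x<_T c\}}\cap\overline{\{x\in M : x>_T c\}}
\]
satisfies $\Snake(\partial_c, T|_{\partial_c})\le s-1$. The idea is that any snake of length $s$ living inside $\partial_c$ can be extended by one step at one of its endpoints using a nearby point strictly on one side of $c$, exploiting that every point of $\partial_c$ has both $L_c$- and $R_c$-neighbors in every metric neighborhood; this yields a snake of length $s+1$ in $M$, contradicting the hypothesis. By induction, $\dim\partial_c\le s-1$.

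Given the lemma, the conclusion proceeds as follows. Using the compactness upgrade together with the snake bound, I would show that for every $\delta>0$ one can pick finitely many cuts $c_1<_T\cdots<_T c_k$ so that each $T$-interval $I_j:=\{x:c_{j-1}<_T x\le_T c_j\}$ has metric diameter $<\delta$. Applied with $\delta$ equal to the Lebesgue number of a given finite open cover $\mathcal{U}$, the metric interiors $\mathring I_j$ form a disjoint open family refining $\mathcal{U}$, and the closed residue $M\setminus\bigsqcup_j\mathring I_j$ is contained in $\bigcup_j\partial_{c_j}$, a finite union of sets of covering dimension $\le s-1$. A standard decomposition result for covering dimension then gives $\dim M\le s$.

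I expect the main obstacle to be the snake-extension step in the structural lemma: depending on the configuration of the snake's endpoints relative to $c$ (entirely below, entirely above, or straddling $c$), the naive one-sided extension may fail, and one has to either prepend instead of append (swapping the two balls, which is legitimate because $\Snake(M,T)$ is a supremum over ordered pairs), insert in the middle, or do a case analysis by parity. A secondary technical challenge is the existence of small-diameter cuts in the third paragraph; verifying this requires showing that the order and metric topologies cannot be too incompatible when the snake number is finite, and is itself a consequence (to be extracted) of the uniform snake bound.
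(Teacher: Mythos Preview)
Your overall strategy—induction on $s$, with the inductive step passing to boundaries of order-cuts—is the same as the paper's. The gap is in your structural lemma.

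You claim $\Snake(\partial_c, T|_{\partial_c}) \le s-1$ for the \emph{restricted} order. Your extension argument works when the witnessing snake $a_0 <_T \cdots <_T a_s$ in $\partial_c$ lies entirely on one $T$-side of $c$: then you append (or prepend) a point from the other side, using that $x,y\in\partial_c$ have metric neighbours in both $L_c$ and $R_c$. But in the straddling case $a_j <_T c <_T a_{j+1}$ none of the moves you list succeeds. Inserting a single point between $a_j$ and $a_{j+1}$ necessarily breaks the alternation, since $a_j$ and $a_{j+1}$ already lie in opposite balls; and appending or prepending at the ends gives no control, because $a_0\in L_c$ and $a_s\in R_c$ may well be $T$-extremal among all nearby points of $M$. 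A parity or case analysis does not rescue this: the obstruction is that a $T$-chain in $\partial_c$ is not confined to either $L_c$ or $R_c$, so one extra point from the opposite side need not extend it.

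The paper's Lemma~1 resolves exactly this by constructing a \emph{new} order $T'$ on $\partial A$ (where $M=A\sqcup B$ is the cut), defined so that $x_0 <_{T'} y_0$ means: arbitrarily close to $x_0$ there is a point of $A$ that is $T$-below an entire $A$-neighbourhood of $y_0$. The point of this definition is that any $T'$-snake in $\partial A$ can be \emph{realized} by a $T$-snake lying entirely inside $A$; one then appends a single point of $B$ to obtain a snake of length $s+1$ in $M$. Proving that $T'$ is a genuine total order (antisymmetry and transitivity both require the snake bound) is the main technical content; this construction is what your sketch is missing.

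On your secondary issue: the paper does not attempt to cut $M$ into $T$-intervals of small metric diameter. Instead its Lemma~2 works locally, covering $M\setminus B_r(x_0)$ by finitely many $T$-convex sets that miss a neighbourhood of $x_0$, and then verifies the definition of the small inductive dimension $\ind$ (equal to $\dim$ here) directly. The boundary of each $T$-convex piece is handled by two applications of Lemma~1 and the sum theorem. This sidesteps the order-topology versus metric-topology compatibility question that your global partition approach would need.
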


\begin{theorem}\label{thm:B}
	Let $M$ be a compact metric space with $\dim M \leqslant s$.
Then there exists an order $T$ for which $\Snake(M,T) \leqslant 2s + 1$.
\end{theorem}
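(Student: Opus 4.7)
The plan is to reduce everything to the zero-dimensional case by pulling $M$ back through a short-fibered map from a totally disconnected compactum. Concretely, by a classical theorem of Hurewicz (see Engelking, \emph{Theory of Dimensions}), every compact metric space $M$ with $\dim M \leq s$ arises as the image of a compact zero-dimensional metric space $K$ under a continuous surjection $f \colon K \to M$ whose fibers satisfy $|f^{-1}(m)| \leq s+1$ for every $m \in M$. Since $K$ is a compact zero-dimensional metric space, it embeds as a closed subset of $[0,1]$; fix such an embedding and write $\leq$ for the induced total order on $K$.

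Using $f$ and $\leq$, define the order $T$ on $M$ by declaring, for distinct $m, m' \in M$,
$$
m <_T m' \iff \min f^{-1}(m) < \min f^{-1}(m').
$$
Because the fibers $f^{-1}(m)$ are pairwise disjoint nonempty compact subsets of $K$, their minima are all distinct, so $T$ is a well-defined total order. Informally, $T$ sorts the points of $M$ by the earliest position at which $K$ meets their preimage.

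To establish $\Snake(M,T) \leq 2s+1$, suppose for contradiction that $\Snake_T(m, m') \geq 2s+2$ for some distinct $m, m' \in M$. Let $F = f^{-1}(m) \cup f^{-1}(m')$, so $|F| \leq 2(s+1) = 2s+2$, and pick $\delta > 0$ smaller than half the minimal pairwise distance between points of $F$; the balls $B_\delta(d) \subset K$ for $d \in F$ are then pairwise disjoint. Since $f$ is a continuous map between compacta, it is closed, so one can choose $\varepsilon > 0$ such that every point of $f^{-1}(B_\varepsilon(m))$ lies in $\bigcup_{d \in f^{-1}(m)} B_\delta(d)$ and symmetrically for $m'$, with also $B_\varepsilon(m) \cap B_\varepsilon(m') = \emptyset$. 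By the snake hypothesis there is a chain $a_0 <_T a_1 <_T \cdots <_T a_{2s+2}$ with $a_{2i} \in B_\varepsilon(m)$ and $a_{2i+1} \in B_\varepsilon(m')$. Setting $c_i = \min f^{-1}(a_i)$ yields $c_0 < c_1 < \cdots < c_{2s+2}$ in $K$, and each $c_i$ lies in $B_\delta(d_i)$ for a unique $d_i \in F$, with $d_i \in f^{-1}(m)$ when $i$ is even and $d_i \in f^{-1}(m')$ when $i$ is odd. Disjointness of the $\delta$-balls as intervals in $[0,1]$ forces the sequence $(d_i)$ to be non-decreasing, and the parity-driven alternation of colors forces $d_i \ne d_{i+1}$, hence $(d_i)$ is strictly increasing. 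This supplies $2s+3$ distinct elements of $F$, contradicting $|F| \leq 2s+2$.

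The principal obstacle is really the first step: extracting the dimension-raising map $f$ with fibers of size at most $s+1$ from the bare covering-dimension hypothesis. Once $f$ is in hand, the snake bound is essentially a pigeonhole on the line, combining monotonicity of $(d_i)$ with the built-in parity alternation of a snake. A self-contained construction of $f$ can be given by iterating a sequence of finite open covers of $M$ of mesh tending to zero and multiplicity at most $s+1$, and taking $K$ to be an inverse limit of appropriate finite index sets; in this language the fiber bound $s+1$ is precisely the cover-multiplicity.
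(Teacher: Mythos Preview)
Your argument is correct. The pigeonhole at the end is clean: once the $c_i$ are strictly increasing and each lies in one of at most $2s+2$ pairwise disjoint real intervals, the assignment $i\mapsto d_i$ is forced to be strictly increasing, and $2s+3$ values cannot fit.

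The paper's proof follows a parallel but more hands-on route. Rather than invoking Hurewicz's light-map theorem, it builds by hand a nested sequence of open covers $\mathcal{U}_i$ of mesh $2^{-i}$ such that every point of $M$ has a neighborhood meeting at most $s+1$ members of $\mathcal{U}_i$; it then assigns to each $x\in M$ a chain $(U_0(x),U_1(x),\dots)$ with $U_i(x)\in\mathcal{U}_i$ and $x\in U_i(x)$, and orders $M$ lexicographically in the chains. The concluding contradiction is a slightly different pigeonhole: among $s+2$ even-indexed snake points in $B_r(a)$ two must share the same $U_k$, and then the odd-indexed point sandwiched between them is forced by lexicographic order to share that $U_k$ as well, contradicting the diameter bound. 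Your final paragraph in effect identifies the bridge between the two arguments: the paper's chains are exactly points of an inverse limit $K$, the lexicographic order is an embedding of $K$ in $[0,1]$, and the choice of a chain for each $x$ is a section of the resulting light map $K\to M$. What your packaging buys is brevity and a transparent reduction to a one-dimensional picture; what the paper's buys is self-containment, since it does not rely on the (nontrivial) Hurewicz theorem as a black box.
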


One can find similarities with the following classical result.

\begin{theorem}[\cite{Hur}]
Let $M$ be a compact metric space with covering dimension $\dim M \geq n$.
Suppose
$f:A \to M$  is a continuous mapping of a closed subspace $A$ of the Cantor set $C$ onto the space $M$. 
Then there exist $n+1$ points $x_0,\dots, x_n \in A$ such that 
$$f(x_1) = \dots = f(x_n)$$.
\end{theorem}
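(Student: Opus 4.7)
I would argue by contrapositive: assuming every fibre of $f$ contains at most $n$ points, I will show $\dim M \leq n-1$, contradicting $\dim M \geq n$. The whole argument stays inside the standard refinement definition of covering dimension and exploits the zero-dimensionality of $A$.

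Fix an arbitrary finite open cover $\mathcal{U} = \{U_1, \ldots, U_k\}$ of $M$ and pull it back to the open cover $\{f^{-1}(U_i)\}$ of $A$. Because $A$ is a closed subspace of the Cantor set, it is a compact zero-dimensional space, so this cover admits a refinement $\mathcal{V} = \{V_1, \ldots, V_m\}$ by pairwise disjoint clopen sets, with an assignment $j \mapsto i(j)$ such that $V_j \subset f^{-1}(U_{i(j)})$. Set $F_j := f(V_j)$. By compactness the $F_j$ are closed in $M$; by surjectivity they cover $M$; and by construction $F_j \subset U_{i(j)}$. The crucial observation is that $\{F_j\}$ has multiplicity at most $n$: if $y \in F_{j_1} \cap \cdots \cap F_{j_p}$ with distinct $j_\ell$, the disjointness of the $V_{j_\ell}$ forces $p$ distinct preimages of $y$, so $p \leq |f^{-1}(y)| \leq n$.

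It remains to upgrade this closed cover of multiplicity $\leq n$ to an open refinement of $\mathcal{U}$ with the same multiplicity bound. This is the classical \emph{swelling lemma}: in a normal space, any finite family of closed sets of multiplicity $\leq n$ is contained in a family of open sets of multiplicity $\leq n$, proved by induction on the number of sets using normality. Applying it to $\{F_j\}$ and then intersecting each resulting open neighbourhood with $U_{i(j)}$ (which still contains $F_j$) produces an open refinement of $\mathcal{U}$ of order $\leq n-1$. Since $\mathcal{U}$ was arbitrary, $\dim M \leq n-1$, the desired contradiction. The only non-trivial input is the swelling lemma, but this is a classical tool of dimension theory, so I foresee no serious obstacle; the bulk of the work is routine verification that the constructions behave as claimed at each step.
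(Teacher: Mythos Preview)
The paper does not supply its own proof of this theorem; it is quoted as a classical result of Hurewitz (with the remark that Hurewitz originally stated it for $\ind$, which coincides with $\dim$ on compact metric spaces). There is therefore nothing in the paper to compare against.

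Your argument is correct and is in fact the standard proof. The steps are all sound: the pulled-back cover on the zero-dimensional compact $A$ refines to pairwise disjoint clopen sets; their images form a finite closed cover of $M$ refining $\mathcal{U}$; disjointness of the $V_j$ together with the fibre bound forces multiplicity $\leq n$; and the swelling lemma converts this into an open refinement of order $\leq n-1$. One small simplification: the usual form of the swelling lemma (e.g.\ Engelking, \emph{Dimension Theory}, 3.1.1--3.1.2) already lets you prescribe open neighbourhoods $U_{i(j)}\supset F_j$ into which the swollen sets $W_j$ must fall, so the final ``intersect with $U_{i(j)}$'' step can be absorbed directly into the swelling.
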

(In fact, Hurewitz proved this result for small inductive dimension $\ind$ that is equal to $\dim$ for compact metric spaces). 
From Theorem \ref{thm:A} one can conclude by passing to the limit the following fact

\begin{proposition}
	Let $M$ be a compact metric space with $\dim M \geq n$.
	Suppose
	$f:A \to M$  is a continuous mapping of a closed subspace $A$ of $\mathbb{R}$ onto the space $M$. 
	Then there exist $n+1$ points $x_0 < \dots < x_n$ such that 
	$$f(x_0) = f(x_2) = \dots =f(x_{2\lfloor n/2 \rfloor})$$ and
	$$f(x_1) = f(x_3) = \dots =f(x_{2\lceil n/2 \rceil - 1}).$$
	
\end{proposition}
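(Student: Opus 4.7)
The plan is to derive the proposition from Theorem \ref{thm:A} in contrapositive form, after first reducing to the case of a compact $A$. The rough idea is that an arbitrary section $\phi: M \to A$ of $f$ induces a total order on $M$; Theorem \ref{thm:A} then guarantees snakes of length $n$ for this order, each of which pulls back through $\phi$ to an ordered tuple in $A$; if $A$ is compact these lifts lie in a bounded set, so we may extract a convergent subsequence and recover the required $n+1$ points in the limit.

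First I would reduce to $A$ compact. Write $A = \bigcup_{k \geq 1}(A \cap [-k,k])$; each $A_k := A \cap [-k,k]$ is compact, hence $M_k := f(A_k)$ is compact and in particular closed in $M$. The countable sum theorem for covering dimension in compact metric spaces gives $\dim M \leq \sup_k \dim M_k$, so some $M_k$ has $\dim M_k \geq n$. Replacing $A$ by $A_k$ and $M$ by $M_k$, I may assume from the outset that $A$ is a compact subset of $\mathbb{R}$ with $f(A) = M$ and $\dim M \geq n$, since any tuple found in $A_k$ is also a tuple in $A$ with the same $f$-images.

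Next I construct the order. For each $m \in M$ set $\phi(m) := \min f^{-1}(m)$, which exists because $f^{-1}(m)$ is a closed subset of the compact set $A$. Declare $m_1 <_T m_2$ iff $\phi(m_1) < \phi(m_2)$; this is a total order on $M$ since $\phi$ is injective. Applying Theorem \ref{thm:A} contrapositively to $(M,T)$ gives $\Snake(M,T) \geq n$, so there exist $x \neq y \in M$ with $\Snake_T(x,y) \geq n$. For each $k \geq 1$ I extract a snake $a_0^{(k)} <_T \dots <_T a_n^{(k)}$ of length $n$ inside $(B_{1/k}(x), B_{1/k}(y))$ and lift it to the strictly increasing tuple $\phi(a_0^{(k)}) < \dots < \phi(a_n^{(k)})$ in the compact set $A$. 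By successively extracting convergent subsequences for $i = 0, 1, \dots, n$, I may assume $\phi(a_i^{(k)}) \to x_i \in A$ for every $i$; then $x_0 \leq x_1 \leq \dots \leq x_n$, and continuity of $f$ together with $a_{2j}^{(k)} \to x$, $a_{2j+1}^{(k)} \to y$ yields $f(x_{2j}) = x$ and $f(x_{2j+1}) = y$. Because $x \neq y$, consecutive $x_i$'s have distinct $f$-images and are therefore themselves distinct, so all inequalities are strict.

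The only delicate step is the reduction to compact $A$; without it the lifts $\phi(a_i^{(k)})$ could escape to infinity in $\mathbb{R}$ and no convergent subsequence would exist. Everything that follows—building the order from $\phi$, invoking Theorem \ref{thm:A}, and passing to the limit along a snake sequence—is then routine once compactness of $A$ is in place.
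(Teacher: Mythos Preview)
Your argument is correct and matches the approach the paper indicates: the paper does not spell out a proof but simply says the proposition follows from Theorem~\ref{thm:A} ``by passing to the limit,'' which is exactly what you do via the section $\phi$, the induced order, and the compactness-based subsequence extraction. Your reduction to compact $A$ via the sum theorem is a necessary detail that the paper omits entirely, so in that respect your write-up is more complete than the paper's sketch.
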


\section{Preliminaries}
\subsection{Snake numbers of a segment and of a circle}

\begin{example}\label{ex:1}
Obviously, for the segment $I = [0,1]$ and the natural order $T$ ($x <_T y$ if $x<y$) we have $\Snake(I,T) = 1$.
\end{example}

\begin{example}\label{ex:2}
If we glue the ends of the segment, we obtain the circle $S^1$. 
We take the order $T$ from Example \ref{ex:1} and specify that $0 = 1 <_T x$ for any other point $x$. 
It is easy to see that if $0 <x,y < 1$, then $\Snake(x ,y) = 1$, and $\Snake(0,x) = 2$, so $\Snake(S^1, T) = 2$.
\end{example}

\begin{proposition}
$\Snake(S^1, T') \geqslant 2$ for any order $T'$. 
\end{proposition}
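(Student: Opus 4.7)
The plan is to exhibit, for any total order $T'$ on $S^1$, an ordered pair of distinct points $(x,y)$ witnessing $\Snake_{T'}(x, y) \geqslant 2$. The first move is to choose $y \in S^1$ that is neither the $T'$-minimum nor the $T'$-maximum — such a $y$ exists because at most two points of $S^1$ can be excluded this way. With this choice, the Dedekind cut at $y$ produces two nonempty sets
$L = \{z \in S^1 : z <_{T'} y\}$ and $R = \{z \in S^1 : z >_{T'} y\}$, which together partition $S^1 \setminus \{y\}$.

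The key topological input is that $S^1 \setminus \{y\}$ is connected (homeomorphic to $\mathbb{R}$). Consequently the partition $L \sqcup R = S^1 \setminus \{y\}$ cannot consist of two relatively closed pieces, so at least one of them fails to be closed in $S^1 \setminus \{y\}$. Suppose $L$ is not closed there (the other case is symmetric): then there exist a point $w \in R$ and a sequence $\ell_n \in L$ with $\ell_n \to w$ in the metric of $S^1$.

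I would then take $x = w$ and build the required snake directly. Given any $\varepsilon > 0$, choose $n$ large enough that $\ell_n \in B_\varepsilon(w)$, and set $a_0 = \ell_n$, $a_1 = y$, $a_2 = w$. The inequalities $\ell_n <_{T'} y <_{T'} w$ follow from $\ell_n \in L$ and $w \in R$; moreover $a_0, a_2 \in B_\varepsilon(w)$ and $a_1 \in B_\varepsilon(y)$. This is a snake of length $2$ in the pair $(B_\varepsilon(w), B_\varepsilon(y))$, so $\Snake_{T'}(w, y) \geqslant 2$, and therefore $\Snake(S^1, T') \geqslant 2$.

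There is no substantial obstacle beyond recognizing that puncturing $S^1$ at a non-extremal point leaves a connected space, which forces the cut $L \sqcup R$ to have a ``boundary crossing'' that the snake exploits. The only bookkeeping is the symmetric case where $R$ is not closed in $S^1 \setminus \{y\}$; one then picks $w \in L$ with an approximating sequence $r_n \in R$ and takes $a_0 = w$, $a_1 = y$, $a_2 = r_n$, and the verification is identical up to relabeling.
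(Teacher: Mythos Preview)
Your proof is correct and takes a genuinely different route from the paper's. The paper argues via the antipodal map: colouring each $x$ blue or red according to whether $x <_{T'} A(x)$ or $x >_{T'} A(x)$, it uses connectedness of $S^1$ to find a point $x_0$ on the boundary of the blue set, and then builds length-$2$ snakes in neighbourhoods of the antipodal pair $(x_0, A(x_0))$. You instead fix a non-extremal $y$, use the connectedness of $S^1 \setminus \{y\}$ to force the Dedekind cut $L \sqcup R$ to have an accumulation point $w$ of one side lying in the other, and build the snake at $(w, y)$. Your approach is more elementary in that it avoids the antipodal structure entirely and would transfer verbatim to any metric space in which some non-extremal point has connected complement; the paper's approach has the minor advantage that the two witness points are automatically far apart. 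Both arguments ultimately exploit the same obstruction: a total order cannot split a connected set into two nonempty pieces that are both closed.
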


\begin{proof}
	For any point $x\in S^1$ there is an antipodal point $A(x)$.
	We colour each point $x$ blue if $x <_T A(x)$ or red if $x >_T A(x)$.
	Since $S^1$ contains points of both colours, 
	there exists $x_0$ that belongs to the boundary of the blue set.
	For a natural $n$, choose in $B_{1/n}(x_0)$ a blue point $b_n$ and a red point $r_n$.
	
	If $A(b_n) <_T r_n$, then $(B_{1/n}(x_0), B_{1/n}(A(x_0)))$ contains the snake $b_n <_T A(b_N) <_T r_n$ of length $2$.
	Otherwise, $(B_{1/n}(A(x_0)), B_{1/n}(x_0))$ contains the snake $A(r_n) <_T r_n <_T A(b_n)$ of length $2$.
	
	One of these two events occurs for infinitely many different $n$, hence for arbitrarily large $n$.
\end{proof}

We will use the following properties of $\ind$ and $\dim$ (\cite{Eng}):

\begin{enumerate}
	\item (Monotonicity) if $M_1$ is a subspace of $M$ then $\ind M_1 \leqslant \ind M$;
	\item  (The sum theorem) if a separable metric space $M$ can be represented as the union of a sequence $F_1, F_2, \dots$ of closed subspaces such that $\ind F_i \leqslant n$ for $i = 1, 2,\dots$, then $\ind M \leqslant n$;
	\item (The coincidence theorem) for every separable metric space $M$ we have $\ind M = \dim M$.
\end{enumerate}

\subsection{Some basics of the topological dimension theory}

\begin{definition}
Let $X$ be a set and $\mathcal{A}$ a family of subsets of $X$.
By the \emph{order} of the family $\mathcal{A}$ we mean the largest integer $n$ such that the family $\mathcal{A}$ contains $n+1$ sets with a non-empty intersection; in no such integer exists, we say that the family $\mathcal{A}$ has order $\infty$.
A cover $\mathcal{B}$ is a \emph{\it refinement} of another cover $\mathcal{A}$ of the same space if for every $B\in \mathcal{B}$ there exists an $A\in \mathcal{A}$ such that $B \subset A$.
\end{definition}

\begin{definition}
	To every metric space $M$ one assigns the \emph{covering dimension} of $M$, denoted by $\dim M$, which is $\infty$ or an integer larger that or equal to $-1$;   the definition of the function $\dim$ is defined by the following conditions:
	\begin{enumerate}
		\item $\dim M \leqslant n$, where $n = -1,0,1,\dots,$ if every finite open cover of the space $M$ has a finite open refinement of order $\leqslant n$;
		\item $\dim M = n$ if $\dim M \leqslant n$ and $\dim M > n$, i.e. the inequality $\dim M \leqslant n-1$ does not hold;
		\item $\dim M = \infty$ if $\dim M > n$ for $n = -1, 0, 1, \dots$
	\end{enumerate}
	
\end{definition}

\begin{definition}
	To every metric space $M$ one assigns the \emph{small inductive dimension} of $M$, denoted by $\ind M$, which is $\infty$ or an integer larger than or equal to -1; the definition of the function $\ind$ is defined by the following conditions:
	
	\begin{enumerate}
		\item $\ind M = -1$ if and only if $M = \emptyset$;
		\item $\ind M \leqslant n$, where $n = 0, 1, \dots$, if for every point $x\in M$ and each $r>0$ there exists an open set $U \subset M$ such that $x\in U \subset B_{\varepsilon}(x)$ and the dimension of its boundary $\ind \partial U \leqslant n-1$;
		\item $\ind M = n$ if $\ind M > n$; 
		\item $\ind M = \infty$ if $\ind M > 0, 1, 2,\dots$
	\end{enumerate}
\end{definition}

 \section{Proof of Theorem~\ref{thm:A}}

\begin{lemma}
Let $(M,T)$ be an ordered metric space and let 
$$\Snake(M,T) \leqslant n.$$ 
Let $M$ be a disjoint union $M= A \sqcup B$, and suppose that $x_1 <_T x_2$ for all $x_1\in A, x_2\in B$.
Then there exists an order $T'$ on the boundary $\partial A$ such that $\Snake(\partial A, T') \leqslant n-1$.
\end{lemma}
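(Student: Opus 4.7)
The plan is to take $T' := T|_{\partial A}$ and argue by contradiction. Assuming $\Snake(\partial A, T') \geqslant n$, I will pick witnesses $x \neq y$ in $\partial A$ with $\Snake_{T'}(x,y) \geqslant n$; for every $\varepsilon > 0$ this yields a $T$-monotone sequence $a_0 <_T \cdots <_T a_n$ in $\partial A$, with even-indexed points in $B_\varepsilon(x)$ and odd-indexed in $B_\varepsilon(y)$. The aim is to add one extra point in $M$ to obtain a length-$(n+1)$ snake in $M$, contradicting the hypothesis $\Snake(M,T) \leqslant n$.

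The central tool will be the boundary extension property: every $z \in A \cap \partial A$ has a $B$-point in each of its neighbourhoods, and any such $B$-point is automatically $T$-greater than every point of $A$; dually for $z \in B \cap \partial A$. Since $A <_T B$, the snake splits into an $A$-prefix $a_0, \ldots, a_k \in A$ and a $B$-suffix $a_{k+1}, \ldots, a_n \in B$ for some $-1 \leqslant k \leqslant n$. In the pure cases $k = n$ or $k = -1$, I apply the extension property at the extreme $a_n$ (respectively $a_0$) to produce a $B$-point (respectively $A$-point) arbitrarily close to it with the correct $T$-relation; metric closeness places it in the correct $\varepsilon$-ball, and appending (respectively prepending, together with a swap of the two ball labels that preserves the snake relation) gives a length-$(n+1)$ snake in $(B_\varepsilon(x), B_\varepsilon(y))$.

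The mixed case $0 \leqslant k \leqslant n - 1$ will be the main obstacle, since now neither extreme of the snake admits an extension on the required side of the cut: $a_n \in B$ has nearby $A$-points only in the $T$-smaller direction, and $a_0 \in A$ has nearby $B$-points only in the $T$-greater direction. The remedy is to exploit that the centres $x, y$ themselves lie in $\partial A$. A case analysis on whether $x$ and $y$ each belong to $A$ or $B$, together with the parity of $n$, should show that in each subcase either $x$ or $y$ itself serves as the added point (it lies trivially in its own $\varepsilon$-ball and, on the correct side of the cut, has the right $T$-relation to the extreme of the snake), or else a $B$-point close to $x \in A$, or an $A$-point close to $y \in B$, supplied by the boundary property does. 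The delicate bookkeeping is the parity matching, which selects between appending and prepending-with-swap; tracking this across every subcase is the crux, and the reason the argument should always close is that $x,y \in \partial A$ forces at least one of the two centres to furnish a point with both the correct $T$-relation to the snake's extreme and sufficient metric proximity to one of $x, y$.
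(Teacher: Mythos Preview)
Your approach---taking $T' := T|_{\partial A}$---is genuinely different from the paper's. The paper does \emph{not} use the restricted order; it constructs a new order $T'$ on $\partial A$ by declaring $x_0 <_{T'} y_0$ when arbitrarily small neighbourhoods of $x_0$ contain an $A$-point that is $T$-below an entire small $A$-neighbourhood of $y_0$. The payoff of this construction is that any $T'$-snake $t_0 <_{T'} \cdots <_{T'} t_n$ in $\partial A$ can be ``lifted'' to a $T$-monotone sequence $r_0 <_T \cdots <_T r_n$ lying \emph{entirely in $A$} and close to the $t_i$; then one appends a single $B$-point near the appropriate centre, which is automatically $T$-above every $r_i$, yielding a snake of length $n+1$. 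With the restricted order this lifting step is unavailable, and you are forced into the case analysis you describe.

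That case analysis has a real gap. Already in the pure case your extension is misplaced: a $B$-point chosen close to the extreme $a_n$ lands in the \emph{same} $\varepsilon$-ball as $a_n$, not the opposite one, so appending it does not produce an alternating sequence. (The easy fix is to take the $B$-point close to $a_{n-1}$, or close to whichever of $x,y$ lies in the ball opposite $a_n$.) The mixed case is more serious. To append after $a_n\in B$ you need a point of $B$ that is $T$-greater than $a_n$ and lies in the opposite ball; to prepend before $a_0\in A$ you need a point of $A$ that is $T$-smaller than $a_0$ in the $y$-ball. The boundary property only supplies points on the correct side of the $A/B$ cut---it says nothing about their $T$-position \emph{within} $A$ or \emph{within} $B$. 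Concretely, for $n=2$, $k=0$ (so $a_0\in A$, $a_1,a_2\in B$), it is entirely possible that every $A$-point in $B_\varepsilon(y)$ is $T$-above $a_0$ and every $B$-point in $B_\varepsilon(y)$ is $T$-below $a_2$; then neither prepending nor appending is available, and using $x$, $y$, or nearby points does not resolve this without a further idea you have not supplied. Your sketch (``tracking this across every subcase is the crux'') does not close, and the paper's constructed order is precisely the device that sidesteps this obstruction.
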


\begin{proof}
For distinct points $x_0, y_0 \in \partial A$ 
we set $x <_{T'} y$ if for any $\varepsilon > 0$ there exist $x\in B_{\varepsilon}(x_0) \cap A$  and some $\delta > 0$ such that 
$x <_T B_{\delta}(y_0) \cap A$, i.e.
$$ x <_T y \text { for any } y \in B_{\delta}(y_0) \cap A.
$$

We first show that for any $x_0, y_0$ at least one of the relations $x_0 <_{T'} y_0$ and $y_0 <_{T'} x_0$ holds. 
Denote $X_k = B_{1/k}(x_0)\cap A$, $Y_k = B_{1/k}(y_0)\cap A$.
For all sufficiently large $k$, the pairs $(X_k,Y_k)$ and $(Y_k,X_k)$ do not contain snakes of length $n+1$.
Consider the largest $m$ such that there is a sequence of points
$t_1 <_T t_2 <_T \dots <_T t_m$ such that 

\noindent $(i)$ all $t_i$ with odd $i$ belong to $X_k$ and all $t_i$ with even $i$ belong to $Y_k$
\begin{center}{or}
\end{center}

$(ii)$ all $t_i $ with odd $i$ belong to $Y_k$ and all $t_i$ with even $i$ belong to $X_k$.

holds.

\noindent If $t_1 \in X_k$, then $t_1 <_T y$ for any $y\in Y_k$, otherwise the sequence $y <_T t_1 <_T \dots <_T t_m$ is longer.
Similarly, if $t_1 \in Y_k$, then $t_1 <_T x$ for 
any $x\in Y_k$. If the situation $(i)$ holds for infinitely many $k$'s, 
then $x_0 <_{T'} y_0$, otherwise $y_0 <_{T'} x_0$.

Assume that for $m > 1$ there exists a cycle
$$x_0 <_{T'} x_1 <_{T'} \dots <_{T'} x_m = x_1
$$
Take $\varepsilon_0 > 0$.
The set $B_{\varepsilon}(x_0)\cap A$ contains a point $y_0$ such that $y_0 <_T B_{\varepsilon_2}(x_1)\cap A$ for some $\varepsilon_2 < \varepsilon_1$. Similarly, in $B_{\varepsilon_1}(x_1)\cap A$ there is a point $y_1$ such that $y_1 <_T B_{\varepsilon_2}(x_2)\cap A$ for some $\varepsilon_2 < \varepsilon_1$, and so on, we construct a sequence 
$$y_0 <_T y_1 <_T y_2 <_T \dots$$

The points $y_0, y_m, y_{2m},\dots$ belong to $B_{\varepsilon_0}(x_0)\cap A$, the points $y_1, y_{m+1}, y_{2m+1}, \dots$ belong to $B_{\varepsilon_1}(x_1)\cap A$.
Since $\varepsilon_0$ was arbitrary, $\Snake(x_0,x_1) = \infty$.
This contradiction shows that the ralations $x<_{T'}y$ and $y<_{T'}x$ cannot hold simultaneously and that the relation $<_{T'}$ is transitive.
So $T'$ is indeed an order relation.

Finally, assume that $\Snake_{T'}(x,y) \geqslant n$ for some points $x,y \in \partial A$.
Take an arbitrary $\varepsilon>0$.
Then there is a sequence of points
$$
t_0 <_{T'} t_1 <_{T'} \dots <_{T'} t_n,
$$

the points with odd indices belong to $B_{\varepsilon}(x) \cap \partial A$, and the points with even indices belong to
$B_{\varepsilon}(y) \cap \partial A$.

Choose $\delta > 0$ and as before, we can find a sequence of points $r_0 <_T r_1 <_T \dots <_T r_n$ such that $y_i \in B_{\delta}(t_i)\cap A$ for $i = 0, 1, \dots,n$.
Both $B_{\varepsilon}(x)$ and $B_{\varepsilon}(y)$ contain points from $B$, hence $(B_{\varepsilon + \delta}(x), B_{\varepsilon + \delta}(y))$ contains a snake of length $n+1$.
This leads us to a contradiction.
\end{proof}

\begin{definition}
	Let $(M,T)$ be an ordered metric space. 
	We call a subset $X\subset M$ \emph{$T$-convex} if for any triple of points $x<_Ty<_Tz$ such that $x,z \in X$ the point $y$ also $\in X$.
	For a subset $A \subset M$ we define its \emph{$T$-convex hull} as the smallest $T$-convex subset of $M$ that contains $A$.
\end{definition}

\begin{lemma}
	Let $(M,T)$ be an ordered compact metric space such that $\Snake(M,T) < \infty$. Let $x_0 \in M$ and $r>0$.
	Then there is a finite family of $T$-convex subsets $X_1,\dots, X_m$ such that their union covers $M - B_{r}(x_0)$ and does not intersect some neighbourhood  $B_{\varepsilon}(x_0)$ of $x_0$.
\end{lemma}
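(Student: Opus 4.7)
The plan is to combine compactness of $M \setminus B_r(x_0)$ with the hypothesis $n := \Snake(M,T) < \infty$ in a localize-then-cut fashion. For each $y \in M \setminus B_r(x_0)$, the bound $\Snake_T(y, x_0) \leq n$ supplies some $\eta(y) > 0$ such that the pair $(B_{\eta(y)}(y), B_{\eta(y)}(x_0))$ contains no snake of length $n+1$; I set $\rho(y) = \min(\eta(y), r/4)$. By compactness the cover $\{B_{\rho(y)}(y)\}$ of $M \setminus B_r(x_0)$ admits a finite subcover indexed by $y_1, \dots, y_N$, and I take $\varepsilon = \min_i \rho(y_i) > 0$. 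The choices force $B_{\rho(y_i)}(y_i) \cap B_\varepsilon(x_0) = \emptyset$ via $d(y_i, x_0) \geq r$ and $\rho(y_i) + \varepsilon \leq r/2$.

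For each $i$ I let $H_i$ denote the $T$-convex hull of $B_{\rho(y_i)}(y_i)$ in $M$ and set $S_i := H_i \cap B_\varepsilon(x_0)$; then I decompose $H_i \setminus S_i$ into its maximal $T$-convex subsets $\{C_{i,j}\}_j$. A small verification shows that each $C_{i,j}$ is $T$-convex as a subset of $M$, not merely of $H_i \setminus S_i$: if $x, z \in C_{i,j}$ and $x <_T w <_T z$ for some $w \in M$, then $w$ lies in $H_i$ by $T$-convexity of the hull, and $w$ cannot lie in $S_i$ (else $x, z$ would sit in distinct components), so $w \in C_{i,j}$. Evidently $C_{i,j} \cap B_\varepsilon(x_0) = \emptyset$. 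Retaining only those components that meet $B_{\rho(y_i)}(y_i)$ and collecting over $i$ will produce the required finite family, provided that the retained components at each $i$ are finite in number.

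This finiteness is the key step. If $k$ distinct components of $H_i \setminus S_i$ meet $B_{\rho(y_i)}(y_i)$, pick representatives $e_1 <_T \dots <_T e_k$, one in each. For each consecutive pair the interval $[e_p, e_{p+1}]_T \subset H_i$ must intersect $S_i$, for otherwise $e_p, e_{p+1}$ would be in the same component; choosing $s_p$ in this intersection yields the strict chain
$$e_1 <_T s_1 <_T e_2 <_T s_2 <_T \dots <_T s_{k-1} <_T e_k,$$
which is precisely a snake of length $2k - 2$ in the pair $(B_{\rho(y_i)}(y_i), B_\varepsilon(x_0))$. Since $\rho(y_i), \varepsilon \leq \eta(y_i)$, the choice of $\eta(y_i)$ forbids snakes of length exceeding $n$, forcing $2k - 2 \leq n$ and hence $k \leq \lfloor n/2 \rfloor + 1$. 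This snake bound is the main obstacle; with it in hand the rest is compactness bookkeeping.
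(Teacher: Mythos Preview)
Your proof is correct and follows essentially the same approach as the paper: cover $M\setminus B_r(x_0)$ by finitely many small balls via compactness, then use the snake bound to show each ball meets only finitely many of the desired $T$-convex pieces. The only organizational difference is that the paper defines a single global equivalence relation on $M\setminus B_r(x_0)$ (declaring $a\sim b$ when the $T$-interval $[a,b]_T$ misses $B_\varepsilon(x_0)$) and bounds the number of classes meeting each ball, whereas you work ball-by-ball, taking the $T$-convex hull $H_i$ of each ball and decomposing $H_i\setminus B_\varepsilon(x_0)$ into maximal $T$-convex pieces; the alternating-snake counting argument is identical in both, and your bound $k\le \lfloor n/2\rfloor+1$ is in fact slightly sharper than the paper's.
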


\begin{proof}
Let $\Snake(M,T) = s-1 < \infty$.
For any $y \neq x_0$ we can choose $\varepsilon_y > 0$ such that 
$(B_{\varepsilon_y}(y), (B_{\varepsilon_y}(x_0))$ does not contain a snake of length $s$. 
The union of sets $B_{\varepsilon_y}(y)$ covers $M  - \{x_0\}$.
The set $N = \{x\in X: d(x_0, x) \geq r\}$ is compact, so
there exists $\varepsilon > 0$ and a finite set $Y \subset M$ such that
\begin{enumerate}
	\item $N \subset \bigcup_{y\in Y} B_{\varepsilon}(y)$,
	\item for any $y\in Y$ the pair $(B_{\varepsilon}(y), B_{\varepsilon} (x_0))$ does not contain a snake of length $s$.
\end{enumerate}

Denote $U:=B_{\varepsilon}(x_0)$.
We introduce an equivalence relation on the points of the set $N$:
$$a\sim b \text{ if the }T\text{-convex hull of the set } \{a,b\} \text{ does not intersect }U.$$

It is easy to see that $\sim$ is indeed an equivalence relation.
Suppose that for some $y\in Y$ the ball $B_{\varepsilon}(y)$ intersects at least $s+1$ of $\sim$-equivalence classes.
Choose a point in each class: 
$$c_0 <_T c_2 <_T \dots <_T c_s.$$ 
Since $c_0$ and $c_1$ belong to different $\sim$-equivalence classes, $U$ contains a point $b_0$ such that $c_0 <_T < b_0 <_T c_1$.
Similarly, we found points $b_1, b_2, \dots, b_{s-1}$ and get that
$(B_{\varepsilon}(y), B_{\varepsilon}(x_0))$ contains a snake of length $2s$, we obtain a contradiction.

Since each $B_{\varepsilon}(y)$ intersects at most $s$ $\sim$-equivalence classes, 
the total number of classes $m$ is finite and $m \leqslant s|Y|$.
Hence the $T$-convex hulls of these $\sim$-equivalence classes satisfy the requirements of the Lemma.
\end{proof}

Now we are ready to prove Theorem \ref{thm:A}.

\begin{proof}
We proceed by induction on $n$.
If $\Snake(M,T) \leqslant 0$, then $|M| \leqslant 1$ and $\dim M \leqslant 0$.
Suppose the statement is true for $n-1$.
We use the definition of small inductive dimension, we want to show
that for every point $x\in M$ and each $r>0$ there exists an open set $U \subset M$ such that $x\in U \subset B_{r}(x)$ and $\ind \partial U \leqslant n-1$.

Using Lemma 2 we find $T$-convex sets $X_1,\dots, X_m$ such that their union covers $M - B_r(x)$ and does not intersect some neighborhood $B_{\varepsilon}(x) $.
Let $X$ be the closure of the union of all $X_i$, $i = 1,\dots, m$.
We succeed if we show that $\ind \partial X \leqslant n-1$.

Since $\partial X \subset \cup \partial X_i$, it's enough to show
that $\ind B \leqslant n-1$ for any $T$-convex set $B$ and to use the monotonicity property of $\ind$.

But indeed, $M$ can be partitioned into three sets $A$, $B$ and $C$ such that all points of $B$ are $T$-less than all points of $A$ and all points of $C$ are $T$-greater than all points of $A$.
Since $\partial(A \cup B)$ and $\partial(A \cup C)$ are compact spaces, by Lemma 1 and the induction hypothesis we have $\ind(\partial(A \cup B)) \leqslant n-1 $ and $\ind(\partial(A \cup C)) \leqslant n-1$.
So $\ind \partial B \leqslant n-1$.
\end{proof}

\section{Proof of Theorem \ref{thm:B}}

Let $M$ be a compact metric space with $\dim M \leqslant n$.

First we construct a sequence $\mathcal{U}_i$, $i = 0, 1, \dots$ of open covers of the space $M$ satisfying the following properties

\begin{enumerate}
	\item $\mathcal{U}_i$ is a refinement of $\mathcal{U}_{i-1}$ for $i = 1, 2, \dots$;
	\item every $U \in \mathcal{U}_i$ has diameter $\leqslant 2^{-i}$;
	\item for any $i$ and any point $x\in M$, some neighbourhood $B_{\varepsilon}(x)$ of $x$  intersects no more than $n+1$ sets from $\mathcal{U}_i$.
\end{enumerate}

We construct this sequence of covers by induction.
Suppose we have already constructed $\mathcal{U}_{i-1}$, then it is easy to find a cover of $\mathcal{V}$ that satisfies properties 1 and 2 and has order $\leqslant n$.
To obtain a cover that also satisfies property 3, note that there is $r>0$ such that for any point $x\in M$ the ball $B_r(x)$ is entirely contained in some set from $\mathcal{V}$. The cover $\mathcal{U}_i$ is obtained from $i$ by replacing each $U$ by $U_{-r/2}$, obtained by subtracting from $U$ the closure of the $(r/2)$-neighborhood of its complement.

For each set $U \in \mathcal{U}_{i+1}$ choose a set $f(U)\in \mathcal{U}_i$ such that $U \subset f(U)$.
A {\it chain} is a sequence of sets $(U_0, U_1, \dots)$ such that $U_{i-1} = f(U_i)$ for $i = 1, 2, \dots$.

Let us show that each point $x\in M$ is the intersection of all elements of some chain.
Indeed, finite chains whose intersection contains $x$ form a tree with vertices of finite degrees, this thee admits arbitrarily long paths, hence from K{\H o}nig's lemma it follows that this tree contains a ray.
Fix for every point $x$ such a chain $c(x) = (U_0(x), U_1(x), \dots )$. Obviously, $c(x)\neq c(y)$ for distinct $x$ and $y$.

For each $i$ we choose an arbitrary total order $T_i$ on $\mathcal{U}_i$. 
We define a total order $T$ on $M$ by the following rule.

$x<_Ty$ if $k$ is such that

$$
 U_i(x) = U_i(y) \text{ for } i = 0, 1,\dots, k-1 \text { and }U_k(x) <_ {T_k} U_k(y),
$$
i.e. we compare corresponding chains lexicographically.

Now let  $a,b \in M$. Choose $k$ such that $2^{-k+1} < d(a,b)$.
Let $r$ be such that $B_r(a)$ and $B_r(b)$ intersect at most $n+1$ sets in $\mathcal{U}_k$.
Denote these sets $A_1,\dots A_{m_1}$ and $B_1,\dots, B_{m_2}$, $m_1, m_2 \leqslant n+1$.
None that no $A_i$ intersects with $B_j$.

Let's prove that $(B_r(a), B_r(b))$ does not contain a snake of length $2n+2$.
Assume the contrary.
The snake consists of $n+2$ points $a_1,\dots,a_{n+2}$ from $B_r(a)$
 and of $n+1$ points $b_1,\dots, b_{n+1}$ from $B_r(a)$.

For $i = 1,\dots, n+2$ the set $U_k(a_i)$ is one of $A_1,\dots,A_{m_1}$.
Thus there are $i < j$ such that
$U_k(a_i) = U_k(a_j)$.
Then $U_{k'}(a_i) = U_{k'}(a_j)$ for all $k' = 0, 1, \dots, k$.
Since $a_i <_T b_{i+1} <_T a_j$, then $U_k(b_{i+1}) = U_k(a_i)$. 
This contradiction shows that $\Snake_T(x,y) \leqslant 2n+1$ and $\Snake(M,T) \leqslant 2n+1$.

\section{Concluding remarks}

\subsection{} 
The compactification theorem says \cite{Eng} that for every separable metric space $X$ there exists a compact metric space $X'$ which contains a dense subspace homeomorphic to $X$ and satisfies $\dim X' \leqslant \dim X$.
Hence Theorem \ref{thm:B} can be extended for all separable metric spaces of covering dimension $\leqslant n$.

\subsection{} 
\begin{definition}
	A topologocal space $M$ is called \emph{totally disconnected} if for every pair $x, y$ of distinct points of $M$ there exists an open-and-closed set $U\subset M$ such that $x\in U$ and $y\in M - U$.
\end{definition}

It is easy to see that every compact  totally disconnected metric space has $\dim = 0$.

\begin{proposition}
	Let $M$ be a totally disconnected separable metric space.
	Then it admits a total order $T$ for which $\Snake(M,T) \leqslant 1$.
\end{proposition}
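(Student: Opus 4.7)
The plan is to embed $M$ injectively via a family of clopen sets into $\{0,1\}^{\mathbb N}$ and pull back the lexicographic order; one checks directly that the lex order on $\{0,1\}^{\mathbb N}$ has snake number $1$, and the key observation is that clopen sets absorb small metric balls, so this feature survives the pullback to $M$ with its original metric.

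First I would produce a countable family $\{U_n\}_{n \in \mathbb N}$ of clopen subsets of $M$ that separates points. For every pair $(x,y)$ with $x \neq y$, total disconnectedness supplies a clopen $U_{x,y} \subset M$ containing $x$ but not $y$; the products $U_{x,y} \times (M \setminus U_{x,y})$ then form an open cover of $M \times M \setminus \Delta$. Since $M$ is separable metric, $M \times M \setminus \Delta$ is Lindel\"of, so a countable subcover exists, and its first factors give the desired family $\{U_n\}$. Next, define $\phi \colon M \to \{0,1\}^{\mathbb N}$ by $\phi(x)_n = \mathbf 1_{U_n}(x)$; injectivity is immediate from separation. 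Pull back the lexicographic order to define $T$ on $M$: $x <_T y$ iff at the first index $k$ where $\phi(x)$ and $\phi(y)$ disagree one has $\phi(x)_k = 0$.

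Finally I would verify $\Snake(M,T) \leqslant 1$. Fix distinct $x, y \in M$ and let $k$ be the smallest index with $\phi(x)_k \neq \phi(y)_k$. Each $U_i$ for $i \leqslant k$ is clopen, hence contains an entire $\varepsilon$-ball around any of its points; choosing $\varepsilon$ small enough to work simultaneously for $i = 1, \dots, k$, the ball $B_\varepsilon(x)$ agrees with $x$ on membership in $U_1, \dots, U_k$ and $B_\varepsilon(y)$ agrees with $y$ on membership in $U_1, \dots, U_k$. Consequently, without loss of generality, every point of $B_\varepsilon(x)$ is $<_T$ every point of $B_\varepsilon(y)$, since the lex comparison is already decided at coordinate $k$. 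This forbids a snake of length $2$ in either $(B_\varepsilon(x), B_\varepsilon(y))$ or $(B_\varepsilon(y), B_\varepsilon(x))$, because such a snake would require a point of one ball strictly $T$-between two points of the other. The main subtlety in this argument is the Lindel\"of step, which plays the role that compactness would play in the classical compact totally disconnected case; once the countable separating family is in hand the rest is a direct transcription of the snake analysis for the lex order on $\{0,1\}^{\mathbb N}$.
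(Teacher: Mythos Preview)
Your proof is correct and follows essentially the same route as the paper: encode points by their indicator sequence with respect to a countable separating family of clopen sets, pull back the lexicographic order, and use clopenness to ensure that small balls around $x$ and $y$ have constant prefixes up to the first coordinate of disagreement. The only substantive addition in your write-up is the explicit Lindel\"of argument producing the countable separating family, which the paper simply assumes without comment.
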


\begin{proof}
	Consider a sequence of open-and-closed sets $U_0, U_1,\dots$ such that for any two distinct points $x,y \in M$ at least one of the sets $U_i$ contains exactly one of $x$ and $ y$.
	For a point $x\in M$ consider a binary sequence $c(x)$, whose $i$-th term is 1 if $x\in U_i$ and 0 otherwise.
	We say that $x <_T y$ is $c(x)$ is lexicographically smaller than $y$.
	
	Suppose $c(x)$ and $c(y)$ differ at $i$-th term. There exists $r>0$ such that all points from $B_r(x)$ have the same prefix of $c(\cdot)$ of length $i$; the same for $B_r(y)$.
	In this situation $(B_r(x), B_r(y))$ does not contain snakes longer than 1.
\end{proof}

Mazurkiewicz showed \cite{Maz} the existence of completely metrizable totally disconnected separable space of arbitrary dimension $n \geqslant 1$ (and consequently of infinite dimension).
Therefore the compactness property cannot be omitted in Theorems \ref{thm:main} and \ref{thm:A}.

\subsection{}
Examples \ref{ex:1} and \ref{ex:2} show that two spaces of the same covering dimension can have different possible minimal snake numbers.
The following question arises.

\begin{question}For given $n$, what is the minimal $s(n)$ such that for any compact metric space $M$ s.t. $\dim M \leqslant n$ there exists an order $T$ for which $\Snake(M,T) \leqslant s( n)$?
\end{question}

Since in \cite{EM2} it was shown that for any order on the product of $n$ tripods there is a pair of points with snake number at least $2n$, the answer is $2n$ or $2n+1$.
It was also shown in \cite{EM1} that for $n=1$ the answer is $3$.

\end{document}